\def\clearp{}
\newtheorem{theorem}{Theorem}
\newtheorem{lemma}{Lemma}[section]
\theoremstyle{definition}
\def\beq{ \begin{equation} }
\def\eeq{ \end{equation} }
\def\mn{\medskip\noindent}
\def\square{\vcenter{\vbox{\hrule height .4pt
  \hbox{\vrule width .4pt height 5pt \kern 5pt
        \vrule width .4pt} \hrule height .4pt}}}
\def\eopt{\hfill$\square$}
\def\ep{\epsilon}
\def\ZZ{\mathbb{Z}}
\def\LL{\mathbb{L}}
\def\ER{Erd\"os-R\'enyi\ }
\def\ERns{Erd\"os-R\'enyi}
\begin{document}

\title{Unusual properties of contact \\ processes on percolated graphs}
\author{Rick Durrett\thanks{The author was supported by NSF Grant DMS 2153429 from the probability program.}}

\date{\today}						

\maketitle

\begin{abstract}
In this paper we will consider the contact process in a very simple type of random environment that physicists call the random dilution model. We start with the contact process on a graph, here either $\ZZ^d$, a $d$-dimensional torus or an \ER graph, and then flip independent $(1-p)$ coins to delete edges, or delete vertices. Let $p^*$ be the threshold for percolation in the diluted graph. We will primarily be concerned with two phenomena. (i) The critical value for the contact process on the dliuted graph $\lambda_c(p)$ does not converge to $\infty$ as $p \downarrow p^*$. (ii) In contrast to the contact process on a homogeneous graph, the density of 1's starting from all sites occupied converges to 0 at a polynomial rate when $p<p^*$ (the ``Griffiths phase'') and like $c/(\log t)^a$ when $p=p^*$. 
\end{abstract}

\section{Introduction}

The state of the contact process on a graph $G$ at time is $\xi_t \subset G$, where the sites in $\xi_t$ are occupied (by particles) and the others are vacant. We will also write the state as $\xi_t: G \to \{0,1\}$ where 0 is vacant and 1 is occupied. Particles die at rate 1. For each unoriented edge $\{x,y\}$ in graph with $\xi_t(x)=1$ and $\xi_t(y)=0$ births from $x$ to $y$ occur at rate $\lambda$. In the bond verson of the dilution model, edges are indpendently deleted with probability $1-p$, and if not deleted have births across them at rate $\lambda$. In the site version, vertices are independently designated as {\bf active} with probability $p$ in which case they give birth at rate $\lambda$ across each adjacent edge. If a sites is not active, it is called {\bf inert} and does not give brth.

One can generalize the site version so that with probability $p$ sites give birth at rate $\lambda$ across all of their edges and with probability $(1-p)$ give birth across  edges at rate $r\lambda$ where $0<r<1$. Understanding the behavior of this process is difficult since the critical value no longer coincides with the onset of percolation. See Vojta and Dickison \cite{VojDic} and Vojta, Farquahr, and Mast \cite{VojFM}. Bramson, Durrett, and Swindle \cite{BDS} studied the one-dimensional contact process with constant birth rates and two death rates. They demonstrated the existence of an intermediate phase in which the contact process survives but does not grow linearly.
For more more on the one dimensional contact process in a random environment see Liggett \cite{Lig92}, and Newman and Volchan \cite{NewVol}.

Returning to our main subject, Noest \cite{Noe86} considered the case of oriented site percolation in $D+1$ dimensions. This process  takes place on the graph 
$$
{\cal L}^D =  \{ (x,n) : x \in \ZZ^D, n \ge 0,  x_1 + \cdots + x_D + n 
\hbox{ is even} \}.
$$
 with connections from $(x,n) \to (y,n+1)$ if $x$ and $y$ are nearest neighbors in $\ZZ^D$. The site $(y,n+1)$ is occupied with probability $c$ if at least one of its neighbors $(x,n)$ are occupied, otherwise it is vacant.

\begin{figure}[ht] 
  \centering
  \includegraphics[width=3.0in,keepaspectratio]{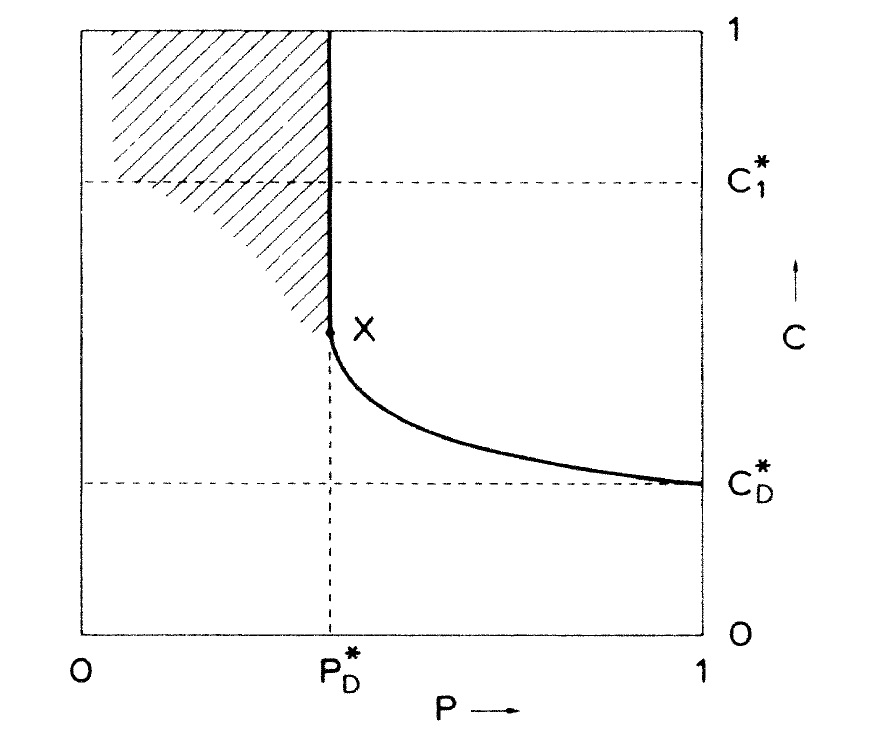}
  \caption{Phase diagram in $D>1$ for randomly diluted site percolation as a function of the probability $p$ that a site is retained and the birth probability $c$ in oriented site percolation.}
  \label{fig:Nfig}
\end{figure}

There is a probability $p^*$ so that when $p \le p^*$ the network of active sites does not form a percolating cluster. On such a network, the existence of a nontrivial stationary distribution for the process is not possible. Noest \cite{Noe86} has drawn a picture of the phase diagram for a generic random dilution model in $D>1$ dimensions that we reproduce in Figure \ref{fig:Nfig}.

A second, more recent, set of results in the physics literature concerns the  contact process on randomly diluted \ER graphs. See e.g.,  Mu\~noz, Juh\'asz,  Castellano, and \'Odor  \cite{MJCO}. Each site in the graph is independently assigned a birth rate, which is $\lambda$ with probability $p$, and $0$ with probability $1-p$.  Juh\'asz, \'Odor, Castellano and Mu\~noz \cite{JOCM} have described the phase diagram, which we have drawn in Figure \ref{fig:Jfig}.

\begin{figure}[ht] 
  \centering
  \includegraphics[width=3.5in,keepaspectratio]{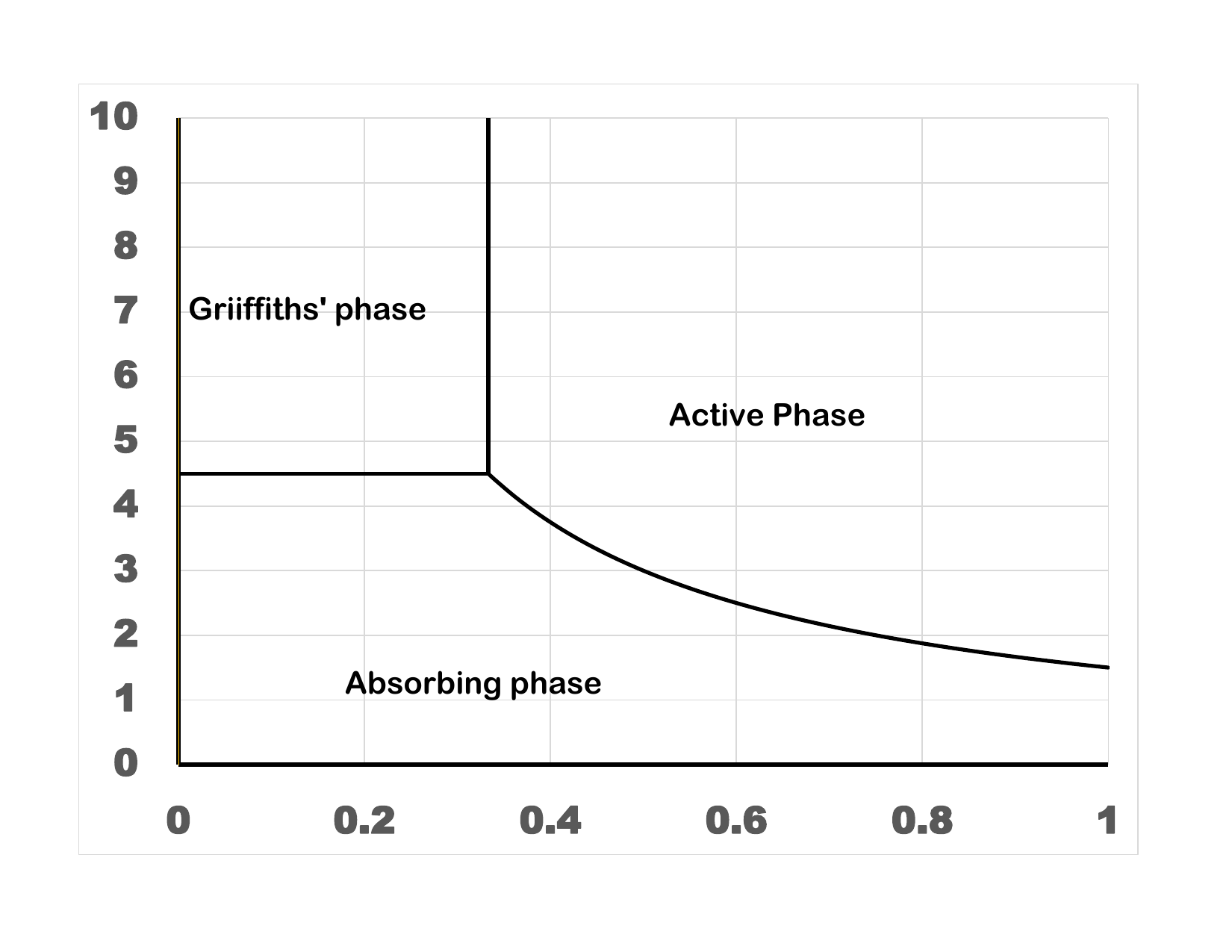}
  \caption{Phase diagram of the random dilution model on an \ER graph with mean degree $\mu = 3$, as a function of the fraction of active nodes $p$, plotted on the $x$-axis and birth rate $\lambda$ plotted on the $y$ axis. The percolation threshold is $p^*=1/3$.}
  \label{fig:Jfig}
\end{figure}

\noindent
{\bf Remark.} A special feature of the \ER graph is that if we start with an \ERns($N,\mu/N$) graph  and delete a fraction $1-p$  of the edges then we end up with  \ERns($N,p\mu/N$). If we delete a fraction $1-p$ of the vertices then we end up with a graph that is asymptotically \ERns($pN,\mu/N$). 

\medskip
Understanding the phase diagrams in the two figures is the main goal of this paper. We will consider five main features, but we will not have much to say about the last two.

\mn
{\bf 1. Supercritical behavior.} On tori in $D>1$ and on \ER graphs the critical value $\lambda_c(p)$ remains bounded as the fraction of active sites decreases to the critical value $p^*$. Intuitively, this occurs because  when $p>p^*$ there is not only a giant component but there is also a path of length $\ge \ep(p)N$ with $\ep(p)>)$, so as shown in Figure \ref{fig:Nfig} the multicritical point $X \le c^*_1$, the critical value in one dimension.  

\mn
{\bf 2. The Griffiths phase} is labeled in Figure \ref{fig:Jfig}. It is the striped region in the Figure \ref{fig:Nfig}. Griffiths' paper \cite{Gri69} concerned the randomly diluted Ising model and showed that the magnetization fails to be an analytic function of the external field $h$ when $h=0$ for a range of temperatures above the critical temperature (which is the subcritical phase of the Ising model).  In the case of the contact process (or oriented percolation) the phrase refers to the fact that in the subcritical region decay to the empty state occurs at a power law rate rather than the usual exponential rate on homogeneous graphs. This change can be explained by observing that in the subcritical regime clusters have a size distribution that has an exponential tail, so the largest cluster is of size $\ge b(p) \log N$ with $b(p)>0$ while the contact process survives for a time that grows exponentially in the size of the cluster. Thus if we start with all sites in state 1 then the system survives for time
$$
\ge \exp( a(\lambda) b(p) \log N) = N^{\kappa(p,\lambda)}.
$$

\mn
{\bf 3. Properties on the critical line, $p=p^*$,} has been studied by Moreira and Dickman \cite{MorDic} and their mirror image twins Dickman and Moreira \cite{DicMor}. One of their findings, which we can give a rigorous explanation for, is the fact that when $p=p^*$ the system survives for much longer than the polynomial amount of time in the homogeneous case.  The intuition is similar to the explanation of the Griffiths' phase in paragraph 2, but now the largest cluster sizes are $\ge b(p)n^\alpha$ for some $\alpha, b(p)>0$, which depends upon the model, so that the survival time is 
$$
\ge\exp(a(\lambda) b(p)n^\alpha).
$$

\mn
{\bf 4. Behavior near the critical curve $(p,\lambda_c(p))$, $p>p^*$.} It is expected that when $p$ is fixed and $\lambda$ decreases to $\lambda_c(p)$ the surivial probability tends to 0 like $(\lambda-\lambda_c(p))^{\beta(p)}$ where $\beta(p)$ is not constant, but varies continuously in $p$. Figure 3 graphs numerical results from \cite{MorDic}. The irregular shape of the graph of $\beta(p)$ indicates that there are substantial uncertainties in the estimated values.

\begin{figure}[ht] 
  \centering
  \includegraphics[width=5.0in,keepaspectratio]{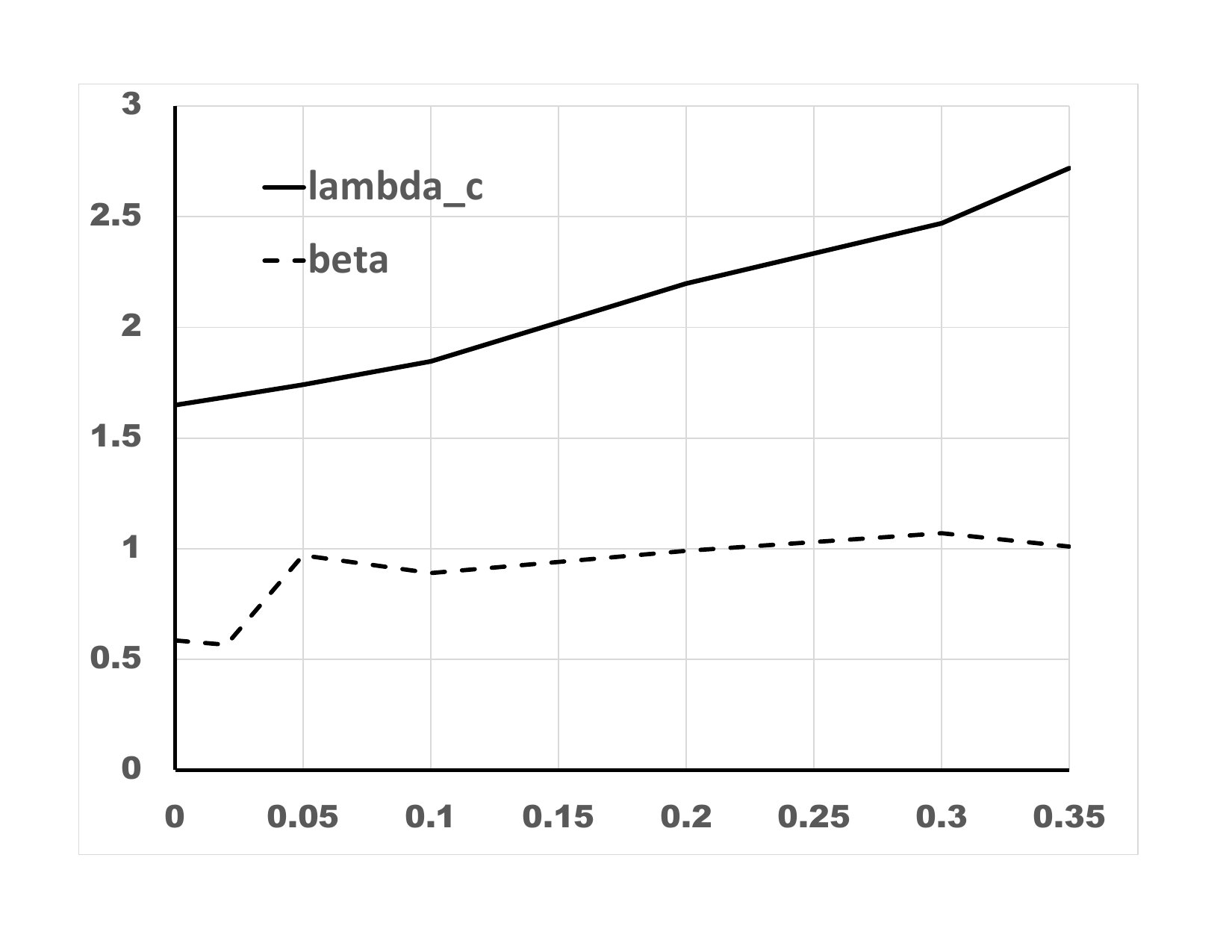}
  \caption{Two dimensional diluted contact process. Simulated critical values $\lambda_c(1-x)$ and critical exponents $\beta(1-x)$ for the survivial probability,  where $x$ is the fraction of deleted sites plotted on the $x$-axis. Here $\lambda$ is the total birth rate not the birth rate per edge.   }
  \label{fig:MDfig}
\end{figure}

\mn
{\bf 5. Critical exponents.} It is natural to ask if critical expoents are changed by randomness. A probabilist might think ``of course they are going to change,''  but physicists organize their models into universality classes in which the critical exponents are equali. For exanple, the DP universality class contains directed percolation, the contact processes, and a number of other processes with  local growth in which all 0's is an absoribing state. See Henrischen \cite{HH} for more details about this class of models.

There is a criterion due to A.B. Harris in a paper \cite{ABH} published in 1974, the year T.E. Harris defined the contact process in \cite{TEH74}. The condition is in terms of the spatail correlation length $L_{\perp}$ or more precisely in terms of the associated critical exponent
 $$
L_{\perp} \approx (\lambda-\lambda_c)^{-\nu_{\perp}}.
$$
For a precise definition of $L_\perp$ for the contact process see \cite{DST}.

Harris has a simple argument in terms of fluctutations, which shows that critical exponents are not changed if $D\nu_\perp> 2$. Using values of $\nu_\perp$ from sinultation he concluded that for the contact process they are changed if $d<4$. Chayes, Chayes, Fr\"ohlich, and Spencer \cite{CCFS} have proved a rigorous version of Harrs' criterion but that involves the notion of finite size scaling so we refer the reader to \cite{CCFS} for details..

\clearp

\section{Supercritical results}

Our first topic is the observation that despite that fact that on the graphs we consider there is no percolation at the critical value $p^*$,
$$
\lim_{p \downarrow p^*} \lambda_c(p) < \infty
$$
The limit exists by monotonicity.
 There is no supercritical phase for percolation in $D=1$, so we begin with 

\subsection{\ER graphs}

When the mean degree  is $\mu >1$ there is a giant component. Ajtai, Komlos, and Szemeredi \cite{Ajtai} were among the first to prove the fact that when $\mu>1$ not only is there is a connected component of size $\sim\ep(\mu)n$, but there is a path with length $\ge \eta(\mu)n$ where $\eta(\mu)>0$.  Using depth-first search (DFS), Krivelevich and Sudakov \cite{KriSud} have given a simple proof of this result and Enriquez, Faraud, and M\'enard \cite{EFM} have proved the following result with a very good constant. It precisely describes the limiting behavior of DFS for \ER graphs but DFS is only one way to generate a long path.

\begin{lemma} \label{longpER}
There is a function $\eta_{ER}: (1,\infty) \to (0,1)$ so that an \ER random graph with $N$ sites mean degree $\mu$ contains a path of length at least $\eta_{ER}(\mu)N$.
\end{lemma}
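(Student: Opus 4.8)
The plan is to follow the depth-first search (DFS) approach of Krivelevich and Sudakov \cite{KriSud}, which has the advantage of exhibiting a long path directly. Run DFS on the graph, keeping the usual three sets: the unexplored vertices $U$, the stack $S$ of vertices currently being explored, and the finished vertices $T$, so that $|S|+|T|+|U|=N$ at all times. The structural fact that makes DFS the right tool is that, read in stack order, the vertices of $S$ always form a path in the graph; it therefore suffices to show that $|S|$ reaches $\eta_{ER}(\mu)N$ at some moment of the search. I would randomize by revealing potential edges one at a time, in the order DFS requests them, each request being the pair formed by the top of $S$ and the next vertex of $U$ (or, when $S$ is empty, the planting of a new root). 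Each request is answered by an independent $\mathrm{Bernoulli}(\mu/N)$ coin. Since DFS never asks about the same unordered pair twice, these answers are genuinely i.i.d., and the graph they reveal is distributed as \ERns($N,\mu/N$).

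Write $s_k$ for the number of successful (edge-present) queries among the first $k$. Two identities drive the argument. First, $|S|+|T|$ is nondecreasing and jumps by one precisely when a vertex leaves $U$ --- which happens at each success and at each new root --- so $|S_k|+|T_k| = s_k + r_k$ with $r_k \ge 0$ the number of roots, and in particular $s_k \le |S_k|+|T_k|$. Second, and more important, a vertex is moved to $T$ only once it has been queried against, and found non-adjacent to, every vertex lying in $U$ at the instant it is popped; because $U$ only shrinks, such a vertex has in fact been rejected by \emph{every} vertex that remains in $U$ at all later times. Consequently, if I stop the search at the first time $\tau$ that $|S|+|T|$ reaches $\alpha N$ (so that $|U_\tau| = (1-\alpha)N$), the number of failed queries obeys $\tau - s_\tau \ge |T_\tau|\,(1-\alpha)N$, and a fortiori $\tau \ge |T_\tau|\,(1-\alpha)N$.

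These are combined through concentration of the binomial. On the one hand $s_\tau \le |S_\tau|+|T_\tau| = \alpha N$; on the other, with high probability $s_\tau \ge (1-\delta)\mu\tau/N$ for a small $\delta$, and inserting the bound $\tau \ge |T_\tau|(1-\alpha)N$ gives $(1-\delta)\,\mu\,(1-\alpha)\,|T_\tau| \le \alpha N$. This bounds $|T_\tau|$ from above, hence bounds $|S_\tau| = \alpha N - |T_\tau|$ from below by $\alpha\big(1 - \tfrac{1}{\mu(1-\alpha)}\big)N$ up to a vanishing error. Maximizing the right-hand side over the admissible range $\alpha < 1 - 1/\mu$ is an elementary one-variable calculation; the optimum is at $\alpha = 1 - \mu^{-1/2}$ and produces a path of length at least
$$
\eta_{ER}(\mu)\,N, \qquad \eta_{ER}(\mu) = \Big(1 - \frac{1}{\sqrt{\mu}}\Big)^2,
$$
with high probability, and one checks at once that this $\eta_{ER}$ maps $(1,\infty)$ into $(0,1)$. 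The sharp constant, which is larger, is the theorem of Enriquez, Faraud and M\'enard \cite{EFM}.

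The step that needs genuine care is the concentration estimate at the random time $\tau$, which is a stopping time that could a priori be as large as $\binom{N}{2}$. Rather than a single Chernoff bound I would prove the lower tail estimate $s_k \ge (1-\delta)\mu k/N$ simultaneously for all $k \ge k_0$ by a union bound: once $k \gtrsim N^{3/2}$ each individual failure probability is at most $e^{-cN^{1/2}}$, and there are only $O(N^2)$ values of $k$, so the union bound still tends to $0$; for the complementary case $\tau < k_0$ the bound $|T_\tau| \le \tau/((1-\alpha)N)$ already forces $|T_\tau| = o(N)$, so $|S_\tau|$ is nearly $\alpha N$ and the conclusion is even stronger. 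The remaining ingredients --- the two counting identities and the optimization over $\alpha$ --- are routine, the one point requiring vigilance being the verification that no unordered pair is ever queried twice, which is what guarantees the coin flips are truly independent.
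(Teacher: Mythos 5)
The paper gives no proof of this lemma—it simply cites Ajtai--Koml\'os--Szemer\'edi, the DFS proof of Krivelevich and Sudakov \cite{KriSud}, and the sharp constant of \cite{EFM}. Your argument is a correct reconstruction of exactly that Krivelevich--Sudakov DFS proof (the two counting identities, the stopping time at $|S|+|T|=\alpha N$, and the uniform Chernoff bound are all handled properly, and the optimization yielding $\eta_{ER}(\mu)=(1-\mu^{-1/2})^2$ checks out), so it matches the approach the paper relies on.
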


With the existence of a long parth established, we can get a lower bound on the survival time by using a result of Durrett and Schonmann \cite{DurSch}  

\begin{lemma} \label{d1surv}
The supercritical nearest neighbor contact process on $[1,N]$ starting from all sites occupied survives for time $\sigma_N$ where 
\beq
(1/N) \log \sigma_N\to \gamma_2(\lambda)>0  \qquad\hbox{in probability}.
\label{rstime}
\eeq
\end{lemma}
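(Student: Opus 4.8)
The plan is to prove two matching bounds for $(1/N)\log\sigma_N$ and then upgrade them from the level of means to convergence in probability. Write $\sigma_N$ for the extinction time of the process on $[1,N]$ started from all sites occupied. Since this is a finite-state continuous-time Markov chain whose only absorbing state is the empty configuration, $\sigma_N<\infty$ almost surely, so the content of the claim is entirely about the exponential growth rate of $\sigma_N$ in $N$. I would assemble three ingredients: a lower bound $\liminf(1/N)\log\sigma_N\ge\gamma>0$, an upper bound $\limsup(1/N)\log\sigma_N\le C<\infty$, and a concentration argument forcing the random quantity to collapse onto a single constant $\gamma_2(\lambda)$.

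For the lower bound I would use the standard block construction comparing the supercritical contact process with supercritical oriented percolation. Fixing $\lambda>\lambda_c$, one tiles space-time by boxes of side lengths $L$ and $T$ and declares a box \emph{open} when a prescribed favorable event occurs, namely an occupied segment at the bottom of the box generates occupied segments covering the relevant neighboring boxes at the top. Choosing $L$ and $T$ large pushes the probability that a box is open above the critical threshold for oriented percolation, and since these events have finite range of dependence the renormalized process dominates a genuinely supercritical oriented percolation. The contact process on $[1,N]$ survives as long as this block process does, and on a strip of width of order $N/L$ supercritical oriented percolation started from a full row survives for a time exponential in the number of columns, which is of order $N$. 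Attractiveness and the graphical representation are what make this comparison rigorous, and the output is $\sigma_N\ge e^{\gamma N}$ with probability tending to $1$.

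For the upper bound I would use a crude \emph{mass-extinction} estimate: in any unit time interval there is a probability at least $e^{-cN}$, with $c=c(\lambda)$, that every death clock in $[1,N]$ rings while no birth clock rings, and on that event the configuration is empty at the end of the interval regardless of where it started. Applying this on disjoint unit intervals, which are independent in the graphical representation, bounds the survival probability to integer time $n$ by $(1-e^{-cN})^{n}$, so $\sigma_N\le e^{cN}$ with high probability. The existence of a single limiting rate then rests on an essentially subadditive structure: removing the central edge of $[1,2N]$ can only shrink the process by attractiveness, so extinction of the process on $[1,2N]$ forces extinction of the two independent half-interval processes obtained after the cut, whence
\beq
P(\sigma_{m+n}\le t)\le P(\sigma_m\le t)\,P(\sigma_n\le t).
\label{submult}
\eeq
Thus $-\log P(\sigma_N\le t)$ is superadditive in $N$ for each fixed $t$, and Fekete's lemma gives a well-defined exponential decay rate for the early-extinction probability; combined with the sandwiching bounds above this isolates the constant $\gamma_2(\lambda)\in(0,\infty)$.

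The final step is a metastability statement: $\sigma_N/E[\sigma_N]$ converges in distribution to a mean-one exponential random variable, proved by a restart argument showing that once the process has survived an initial window it effectively forgets its starting configuration, so extinction becomes an approximately memoryless rare event. Granting this, one writes $(1/N)\log\sigma_N=(1/N)\log E[\sigma_N]+(1/N)\log\!\big(\sigma_N/E[\sigma_N]\big)$; the first term converges to $\gamma_2(\lambda)$ and the second tends to $0$ in probability because its numerator factor is tight, yielding \eqref{rstime}. The main obstacle is precisely this concentration step: the block construction and the mass-extinction bound are robust and standard, but showing that the upper and lower rate bounds coincide and that $\log\sigma_N$ concentrates rather than merely having a convergent mean requires the asymptotic exponentiality of the extinction time, whose proof hinges on a careful restart and loss-of-memory analysis of the supercritical dynamics.
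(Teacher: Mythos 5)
The paper does not prove this lemma at all: it is quoted verbatim from Durrett and Schonmann \cite{DurSch} (building on Durrett and Liu's earlier work for large $\lambda$), so there is no in-paper argument to compare against. Judged on its own terms, your outline assembles the right standard ingredients --- a block-construction comparison with supercritical oriented percolation for the lower bound, a ``mass extinction'' estimate for the upper bound, and the $\mathrm{Exp}(1)$ limit law for $\sigma_N/E\sigma_N$ for concentration --- and the individual steps you describe (the coupling that gives $P(\sigma_{m+n}\le t)\le P(\sigma_m\le t)P(\sigma_n\le t)$ by cutting the middle edge, the reduction of \eqref{rstime} to convergence of $(1/N)\log E\sigma_N$ once the exponential limit law is granted) are all correct as stated.

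The genuine gap is the mechanism you propose for producing a \emph{single} constant $\gamma_2(\lambda)$. Your two explicit bounds give $\gamma\le\liminf(1/N)\log\sigma_N\le\limsup(1/N)\log\sigma_N\le C$ with $\gamma<C$ in general, and nothing in the sketch forces them to meet. The superadditivity of $-\log P(\sigma_N\le t)$ in $N$ does not bridge this: Fekete's lemma there is applied at \emph{fixed} $t$ as $N\to\infty$, producing a rate function $h(t)=\lim_N -(1/N)\log P(\sigma_N\le t)$, whereas the regime that matters is $t=e^{\theta N}$ with $t$ and $N$ coupled; without uniformity in $t$ this limit says nothing about where $\sigma_N$ actually lives. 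Likewise, the metastability theorem only reduces the problem to showing that $(1/N)\log E\sigma_N$ converges --- it does not supply that convergence, and your subadditive structure yields only $E\sigma_{m+n}\ge\max(E\sigma_m,E\sigma_n)$, not supermultiplicativity. What actually closes the argument in \cite{DurSch} is a different superadditive object: the probability $q_N$ that the \emph{infinite-volume} process started from $[1,N]$ fully occupied dies out. This is a decreasing event of the graphical representation, so Harris's correlation inequality gives $q_{m+n}\ge q_m q_n$ and hence a genuine limit $\gamma_2(\lambda)=-\lim(1/N)\log q_N$; the remaining (substantial) work is to show via restart/regeneration arguments that $E\sigma_N$ is comparable to $1/q_N$ up to factors subexponential in $N$. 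Your write-up honestly flags the concentration step as the main obstacle, but the route you indicate toward the constant is not the one that works, and that identification is the heart of the theorem rather than a finishing touch.
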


\noindent
The same result, with different constants, holds for oriented bond and site percolation in $1+1$ dimensions.

\begin{theorem} \label{lbsurvER}
Suppose $\nu = \mu p>1$, $\lambda > \lambda_c(\ZZ)$, and $\delta>0$. The  contact process on bond diluted \ERns$(N,\mu/N)$ started from all sites occupied survives for time 
$$
\sigma_N\ge \exp((1-\delta)\gamma_2(\lambda) \eta_{ER}(\nu) N)
\qquad\hbox{ for large $N$}.
$$ 
\end{theorem}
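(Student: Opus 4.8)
The plan is to reduce the survival of the contact process on the whole diluted graph to the survival of a one-dimensional contact process living on a long embedded path, and then invoke Lemmas \ref{longpER} and \ref{d1surv}. First I would record that, by the Remark following the phase-diagram discussion, bond dilution of \ERns$(N,\mu/N)$ with retention probability $p$ produces a graph distributed exactly as \ERns$(N,\nu/N)$ with $\nu=\mu p>1$. Since $\nu>1$, Lemma \ref{longpER} guarantees that with probability tending to $1$ this graph contains a path of length at least $\eta_{ER}(\nu)N$. I would fix a \emph{deterministic} target length $M_N=\lfloor\eta_{ER}(\nu)N\rfloor$ and, on the event that such a path exists, single out a subpath $P$ on exactly $M_N$ vertices; passing to a deterministic length sidesteps the awkwardness of feeding a random index into the in-probability statement of Lemma \ref{d1surv}.

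Next I would set up a monotonicity (attractiveness) comparison in the graphical representation. Running the contact process $\xi_t$ on the full diluted graph and the process $\zeta_t$ that is allowed to use only the edges of $P$, both driven by the same death marks and birth arrows and both started from the all-occupied configuration, the standard coupling yields $\zeta_t\subseteq\xi_t$ for every $t$ once $\xi_t$ is restricted to $P$: every birth arrow that $\zeta$ uses is also available to $\xi$, which in addition has the arrows across the remaining edges. Consequently $\zeta_t\neq\emptyset$ forces $\xi_t\neq\emptyset$, so the survival time of $\xi$ dominates that of $\zeta$, and it suffices to bound the survival time of $\zeta$ from below.

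Finally, $\zeta$ is precisely a supercritical nearest-neighbor contact process on the segment $[1,M_N]$ (supercritical because $\lambda>\lambda_c(\ZZ)$) started from all sites occupied, so Lemma \ref{d1surv} applies and gives $(1/M_N)\log\sigma_{M_N}\to\gamma_2(\lambda)$ in probability. Hence with probability tending to $1$ we have $\sigma_{M_N}\ge\exp((1-\delta/2)\gamma_2(\lambda)M_N)$, and since $M_N\ge\eta_{ER}(\nu)N-1$ the right-hand side exceeds $\exp((1-\delta)\gamma_2(\lambda)\eta_{ER}(\nu)N)$ for all large $N$ after absorbing the additive constant into $\delta$; combined with the domination of the previous paragraph this is the asserted bound. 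The individual steps are short, so the point I expect to require the most care -- the main obstacle -- is the bookkeeping of the two independent sources of randomness (the random graph and the random contact process) together with the transfer of the in-probability conclusion of Lemma \ref{d1surv} into a statement that holds with probability tending to $1$ as $N\to\infty$; the substantive analytic work is already packaged inside the two cited lemmas.
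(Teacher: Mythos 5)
Your proposal is correct and follows essentially the same route as the paper: identify the bond-diluted graph as \ERns$(N,\nu/N)$ via the Remark, extract a path of length $\ge \eta_{ER}(\nu)N$ from Lemma \ref{longpER}, and apply the one-dimensional survival bound of Lemma \ref{d1surv} to the restricted process. The monotonicity coupling and the handling of the in-probability statement are details the paper leaves implicit, and you have filled them in appropriately.
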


\subsection{Two dimensions}

We start with $Q^2_N =\ZZ^2 \cap [1,N^{1/2}]^2$ with edges to nearest neighbors that are also in the set and do bond dilution, where we keep edges with probability $p$. One important reason for deleting edges is that in two dimensional bond percolation we are able to exploit {\bf planar graph duality} between percolation on $Q^2_N$ and on a dual graph described in Section \ref{sec:Pf2d}. Using duality and some well-known facts about sponge crossings in two dimensional percolation, we are able to show that there is a positive constant $\eta_2(p)>0$ so that with high probability we have a path of length $\ge \eta_2 (p)N/\log N$. Using our result for the survival time of the one-dimensional contact process, Lemma \ref{d1surv}, now we have

\begin{theorem} \label{dens2D}
Suppose $p>1/2$, $\lambda > \lambda_c(\ZZ)$, and $\delta>0$ and consider the nearest neighbor contact process on $Q^2_N$ with bond dilution in which edges were kept with probability $p$. Starting from all sites occupied, the process survives for time
$$
\sigma_N \ge \exp(\gamma_2(\lambda)\eta_2(p) N/\log N)
\qquad\hbox{when $N$ is large.}
$$ 
\end{theorem}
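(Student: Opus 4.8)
The plan is to mimic the proof of Theorem~\ref{lbsurvER}: produce, with high probability, a single open self-avoiding path $P$ in the bond-diluted box $Q^2_N$ whose length is at least $\eta_2(p)\,N/\log N$, and then note that the contact process confined to the edges of $P$ is a supercritical one-dimensional nearest-neighbor contact process to which Lemma~\ref{d1surv} applies. For the coupling step, write $P=(v_0,\dots,v_L)$ and run the contact process on the retained graph while suppressing every birth that does not cross an edge of $P$; in the basic (graphical) coupling this yields exactly the nearest-neighbor contact process on the segment $[0,L]$ and can only decrease the occupied set, so the true process on $Q^2_N$ dominates it. Since $\lambda>\lambda_c(\ZZ)$ the restricted process is supercritical, and Lemma~\ref{d1surv} gives, with probability tending to $1$, survival for time at least $\exp((1-\delta)\gamma_2(\lambda)L)$; taking $L\ge\eta_2(p)N/\log N$ and absorbing the factor $(1-\delta)$ into $\eta_2(p)$ produces the stated bound. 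Both the path event and the one-dimensional survival event hold with high probability, so their intersection does as well.

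The real work is the construction of $P$, which is where planar duality enters. Because $p>1/2=p_c(\ZZ^2)$, each dual bond is open with probability $1-p<1/2$, so the dual process is subcritical and dual connectivities decay exponentially. I would partition $[1,\sqrt N]^2$ into $\sqrt N/h$ horizontal strips of height $h=C\log N$. A strip of dimensions $\sqrt N\times h$ fails to have a left-to-right open crossing only if it has a top-to-bottom dual crossing, which requires a dual open path of length $\ge h$; summing the subcritical tail bound over the $O(\sqrt N)$ possible starting columns shows the failure probability is at most $\sqrt N\,e^{-ch}$. With $h=C\log N$ the union bound over all $\sqrt N/h$ strips is at most $(N/h)e^{-ch}=(N/(C\log N))N^{-cC}$, which tends to $0$ once $C=C(p)$ is chosen with $cC>1$. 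Hence, with high probability, every strip is crossed, and since each crossing uses at least $\sqrt N$ edges the crossings already contribute $\ge \sqrt N\cdot(\sqrt N/h)=N/(C\log N)$ edges.

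The main obstacle is splicing these per-strip crossings into one simple path, because the crossing of strip $i$ need not meet that of strip $i+1$. To join them I would work inside the union of two adjacent strips, a $\sqrt N\times 2h$ rectangle, and apply the standard sponge-crossing and FKG machinery: conditioning on, say, the lowest horizontal open crossing in each strip, one attaches short open vertical segments connecting consecutive crossings, the existence of these segments again being guaranteed off an event of dual-crossing type with probability bounded by $e^{-ch}$ per junction. After all junctions are made one has an open connected subgraph of the required size, which I then prune to a simple path $P$; this pruning and the detours cost at most a constant factor in both length and failure probability, so $P$ still has length $\ge \eta_2(p)N/\log N$ for a suitable $\eta_2(p)>0$. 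None of the ingredients --- rectangle crossing probabilities, exponential decay in the subcritical dual, and FKG --- is individually difficult; the crux is the careful bookkeeping of the vertical detours and of the self-avoidance pruning, which is carried out in Section~\ref{sec:Pf2d}.
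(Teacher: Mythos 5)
Your proposal follows essentially the same route as the paper: use planar duality and the exponential decay of subcritical (dual) connectivities to show that each $\sqrt N\times C\log N$ horizontal strip has a left-to-right open crossing with high probability (Lemma~\ref{crossprob}), splice the crossings into a single self-avoiding path of length $\ge \eta_2(p)N/\log N$, and feed that path into Lemma~\ref{d1surv}. The only real difference is cosmetic and lies in the splicing step: the paper joins consecutive strip crossings by vertical crossings of $C\log L\times 2C\log L$ boxes placed at alternating ends of the strips (producing a zigzag path), rather than by conditioning on lowest crossings and invoking FKG as you suggest.
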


\noindent
This falls short of the gold standard of survival for time $\exp(c(\lambda,p)N)$ but not by much. Of course if one can prove that there is an $\eta(p)>0$ so thatpaths of length $\ge \eta(p)N$ exist for $p>p^*$ then exponential survivial time follows. This should also be true on tori in $d\ge 3$, and this might not be too hard for an expert in percolation.

\clearp

\section{Griffiths' phase}

\subsection{\bf Results in one dimension}

Noest \cite{Noe88} studied oriented site percolation on 
$$
{\cal L}^1 = \{ (m,n) : m \in \ZZ, n \ge 0, m+n \hbox{ is even}\}.
$$ In this system if $(x-1,n)$ or $(x+1,n)$ is occupied then with probability $\theta$, $(x,n+1)$ is occupied, otherwise $(x,n+1)$ is vacant. Here we use the words result and derivation rather than theorem and proof to indicate that the result we are about to state is not a rigorous result. The 3 here means that it is the physics analogue of Theorem 3.

\mn
{\bf Physics Result 3.}
{\it Suppose that in site dilution on $[1,N]$ the fraction of sites retained is $p<1$.  If the process starts from all sites occupied, then there is a constant $c(p,\theta)$ so that the fraction of occupied sites at a large time $t$ satisfies
\beq
u(t) \ge c  (at/b)^{-b/a} \log(at/b) \quad\hbox{as $t\to\infty$,}
\label{Noestasy}
\eeq
where $a=\gamma_2(\theta)$, and $b = \log(1/p)$. }

\mn
{\it Derivation.} Here we follow page 2717 of Noest \cite{Noe88}. 
The probability that a string of length $n$ of active sites starting from all sites occupied has not reached the all 0's state at time $t$
$$
u_n(t) = P( \sigma_n > t ) = \exp(-t/T_n) \quad\hbox{where}\quad 
T_n \approx \exp(a(\theta) n).
$$
The probability of occurrence of an interval of exactly $n$ good sites is $P_n=p^n(1-p)^2$.
The fraction of occupied sites 
$u(t) = \sum_n n P_nu_n(t)$. 
Thus if $A=a(\theta)$ then the ``effective decay time'' 
\begin{align*}
T=\sum _t u(t) &=\sum_n nP_n \cdot ET_n \\
&\sim (1-p)^2 \sum_{n=1}^\infty  (Ap)^n  = Ap(1-p)^2(1-Ap)^{-2},
\end{align*}
if $p<1/A$. The asymptotic behavior of $u(t)$ can be obtained from 
\begin{align}
u(t) & = \sum_n n p^n \exp(-t \exp(-an) )
\nonumber \\
& \approx \int_0^\infty dx\,  x \exp[-bx - t\exp(-ax)],
\label{Neq4}
\end{align}
where $b=-\log p$. To prepare to use Laplace's method. 
Let 
\begin{align*}
\phi(x) & = -bx - t \exp(-ax) ,\\ 
\phi'(x) & = -b +at \exp(-ax) = 0 \quad\hbox{when $x^*=(1/a) \log(at/b)$}, \\
\phi''(x) & = -a^2t\exp(-ax) \quad \hbox{so $\phi''(x^*) = -ab$}.
\end{align*}
At the maximum 
$$
\phi(x^*) = -(b/a) \log(at/b)  -t \exp(-\log(at/b)) = -(b/a) [\log(at/b) +1],
$$
which means $\exp(\phi(x^*)) = (at/b)^{-b/a} \cdot e^{-b/a}$.
\eopt

\medskip
To be able to compare with the conclusion of the next result, note that if we forget about the log factor and the constants then $u(t) = 1/N$ (and hence there are $O(1)$ occupied sites) when 
\beq
t = N^{a/b} = N^{\gamma_2(\theta)/\log(1/p)}.
\label{translate}
\eeq
Turning to our rigorous result.

\begin{theorem} \label{Gphase}
Suppose $p<1$, $\lambda>\lambda_c(\ZZ)$, and $\delta>0$. The site diluted contact process on $[1,N]$ starting from all sites occupied survives for time 
\beq
\sigma_N \ge N^{(1-\delta) \gamma_2(\lambda)/\log(1/p)}
\quad\hbox{for large $N$}.
\label{lb1D}
\eeq
\end{theorem}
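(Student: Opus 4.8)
The plan is to exploit the fact that, although every $p<1$ is subcritical for percolation on the line, the active sites still contain long runs: the longest block of consecutive active sites in $[1,N]$ has length $\approx \log N/\log(1/p)$. A supercritical contact process confined to a block of length $n$ is, by Lemma \ref{d1surv}, an ordinary one-dimensional contact process, so it survives for time $\exp((1-o(1))\gamma_2(\lambda) n)$; feeding in $n\approx \log N/\log(1/p)$ converts this exponential-in-length lifetime into the polynomial-in-$N$ bound \eqref{lb1D}. Thus the two ingredients are a lower bound on the longest run of active sites and a monotonicity argument that reduces the diluted process to a standard contact process on that run. Throughout, the conclusion is meant to hold with probability tending to $1$, matching the in-probability statement of Lemma \ref{d1surv}.

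First I would produce the long run. Fix $\delta_1\in(0,\delta)$ and set $m=\lceil (1-\delta_1)\log N/\log(1/p)\rceil$, so that $p^{m}\ge p\,N^{-(1-\delta_1)}$. Partition $[1,N]$ into $\lfloor N/m\rfloor$ disjoint blocks of length $m$. Since sites are activated independently, the events ``block $i$ is entirely active'' are independent, each of probability $p^m$, so the probability that no block is fully active is $(1-p^m)^{\lfloor N/m\rfloor}\le \exp(-\lfloor N/m\rfloor p^m)$, and one checks $\lfloor N/m\rfloor p^m\ge c\,N^{\delta_1}/\log N\to\infty$ for a constant $c=c(p)>0$. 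Hence, with probability tending to $1$, the environment contains an interval $I=[a,a+m-1]$ of $m$ consecutive active sites; call this event $A_N$, and on it fix (say) the leftmost such $I$, whose location depends only on the environment.

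Next I would dominate the diluted process from below using the graphical representation, whose recovery and birth Poisson processes are independent of the environment. On $A_N$, consider the auxiliary process that keeps only births along the $m-1$ internal edges of $I$ and discards every birth arrow leading out of $I$. Discarding arrows can only remove particles, so by the standard graphical coupling the auxiliary process is pointwise dominated by the diluted process started from all sites occupied, and its extinction time is therefore a lower bound for the survival time in the theorem. But all sites of $I$ are active, so the auxiliary process is exactly the nearest-neighbor contact process with birth rate $\lambda$ on the path $[1,m]$ started from all occupied; as $\lambda>\lambda_c(\ZZ)$ it is supercritical. Because $m=m(N)$ is deterministic and the graphical representation is translation invariant, its extinction time $\sigma_m$ has, conditionally on the environment, exactly the law in Lemma \ref{d1surv} with $N$ replaced by $m$, the same for every realization in $A_N$. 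Thus for any $\delta_2>0$, the lemma gives $P(\sigma_m\ge \exp((1-\delta_2)\gamma_2(\lambda) m))\to1$. Combining with $P(A_N)\to1$ by conditioning on the environment, and using $m\ge (1-\delta_1)\log N/\log(1/p)$, we obtain, with probability tending to $1$,
$$
\sigma_N\ \ge\ \exp\big((1-\delta_2)\gamma_2(\lambda) m\big)\ \ge\ N^{(1-\delta_1)(1-\delta_2)\gamma_2(\lambda)/\log(1/p)}.
$$
Choosing $\delta_1,\delta_2$ with $(1-\delta_1)(1-\delta_2)\ge 1-\delta$ yields \eqref{lb1D}.

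The run-length estimate is routine; the step needing the most care is the reduction to Lemma \ref{d1surv}. Two points deserve attention. First, the block length $m$ must be made deterministic rather than using the genuinely random longest run, so that the in-probability statement of Lemma \ref{d1surv} applies directly; this is why I pass to a fixed sub-path of the discovered run. Second, since the run location is random, one must invoke translation invariance of the graphical representation and the independence of the birth/recovery arrows from the environment to conclude that, conditionally on $A_N$, the confined process is distributed as the contact process on $[1,m]$. Granting these, the two high-probability events combine cleanly and no further estimates are required.
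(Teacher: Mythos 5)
Your proposal is correct and follows the same overall strategy as the paper: locate an active run of length $\approx \log N/\log(1/p)$ and feed it into Lemma \ref{d1surv}. The one place you genuinely diverge is in how the run is produced. The paper decomposes $[1,N]$ into its maximal active intervals, whose lengths are i.i.d.\ geometric$(1-p)$, counts that there are $\sim Np(1-p)$ of them, and proves a two-sided law of large numbers for the maximum (Lemma \ref{maxL1D}) via a first-moment upper bound and a Chebyshev second-moment lower bound. You instead partition $[1,N]$ into deterministic disjoint blocks of length $m=\lceil(1-\delta_1)\log N/\log(1/p)\rceil$ and use independence of the blocks to show one is fully active with probability $\to 1$. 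Your argument is more elementary (no second moment needed) and suffices because only the lower bound on the longest run is used; the paper's version buys the matching upper bound, which is what makes the comparison with Noest's Physics Result 3 sharp. You are also more explicit than the paper about two points it elides: passing to a deterministic sub-path length so that the in-probability statement of Lemma \ref{d1surv} applies, and using the independence of the graphical representation from the environment to handle the random location of the run. Both treatments are sound.
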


\noindent
The key is the fact  show that the largest interval of active sites in $[1,N]$,
$$
L(p) \sim \log N/\log (1/p).
$$ 
see Lemma \ref{maxL1D}.
Then we use the result for the survivial of the one dimensional contact process given in Lemma \ref{d1surv}.

\subsection{\ER graphs}

 We begin by giving the argument from Section II.C of Juh\'asz et al \cite{JOCM}.The ingredients in the proof are the following. 

\begin{itemize}

\item
A well-known result  for \ER graphs which can be found in many places  (see e.g., (1.6.10) in \cite{DoG}) implies that when $\nu \le 1$ the network of active nodes consists of finite clusters whose distribution is given by
\beq
P(s) \sim \frac{1}{\nu \sqrt{2\pi}} s^{-3/2} e^{-s\alpha(\nu)}
\quad\hbox{where $\alpha(\nu) = \nu-1-\ln(\nu)$}
\label{Pofs}
\eeq
and $\nu = \mu p$ is the average degree in the diluted graph.

\item
As in the previous example, the long-time decay of the fraction of occupied sites $u(t)$ can be written as the following integral
\beq
u(t) \sim \int ds \, s P(s) \exp(-t/\tau(s)),
\label{rhot}
\eeq
where $\tau(s)$ is the characteristic decay time of a region of size $s$

\item
$\tau(s)$ grows exponentially (Arrhenius law) with the cluster size 
\beq
\tau(s) = \exp(A(\lambda)s),
\label{taus} 
\eeq
where  $A(\lambda)$ does not depend on $s$.

\end{itemize}

\noindent
Using a saddle-point approximation with \eqref{rhot}, see Section \ref{sec:PfER} for details, one obtains

\mn
{\bf Physics Result 4.}
{\it Suppose $\nu = \mu p < 1$,  $\lambda > \lambda_c(\ZZ)$, and $\delta>0$.  In the edge diluted contact process on \ERns$(N,\mu/N)$ the fraction of occupied sites
 satisifes}
$$
u(t) \sim t^{-\theta(\nu,\lambda)}\quad\hbox{where}\quad
\theta(\nu,\lambda) = -\alpha(\nu)/A(\lambda).
$$

\mn
Converting the decay rate to survival time as before, $u(t)=1/N$ when 
\beq
t=N^{A(\lambda)/a(\nu)}
\label{physpre}
\eeq

The claim that the survival times only depends on the size of the cluster
is too good to be true, but it is also more than we need. We leave as an open problem to

\mn
{\bf Prove or disprove.} There are constants $c,C$ that depend on the graph but are independent of $\lambda$ and $s$  so that
$$
\exp(cA(\lambda) s ) \le \tau(s) \le \exp(CA(\lambda) s ) \quad \hbox{for large $s$}.
$$

\medskip
Our rigorous result is

\begin{theorem} \label{GphaseER}
Suppose $\nu = \mu p < 1$,  $\lambda > \lambda_c(\ZZ)$, and $\delta>0$.  The  contact process on bond diluted \ERns$(N,\mu/N)$ starting from all sites occupied survives for time
\beq
\sigma_N \ge N^{(1-\delta) \gamma_2(\lambda)/\log(1/\nu)}
\qquad\hbox{for large $N$.}
\label{ERsurvbd}
\eeq
\end{theorem}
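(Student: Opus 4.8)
The plan is to mirror the one-dimensional argument of Theorem \ref{Gphase}. By the Remark, bond dilution turns $\ERns(N,\mu/N)$ into $\ERns(N,\nu/N)$ with $\nu=\mu p<1$, so the diluted graph is subcritical. Rather than track cluster sizes (which live on the larger scale $\log N/\alpha(\nu)$), I would directly produce a long \emph{path}: I claim that for every $\delta'>0$, with probability tending to $1$ the graph contains a self-avoiding path on $n := \lfloor (1-\delta')\log N/\log(1/\nu)\rfloor$ vertices. Given such a path $\pi$, the contact process restricted to the edges of $\pi$ is exactly the nearest-neighbor contact process on $[1,n]$, and by the graphical-representation coupling it is dominated by the process on the full diluted graph (adding edges only helps), so survival on $\pi$ forces survival on the whole graph. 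Since $\lambda>\lambda_c(\ZZ)$, Lemma \ref{d1surv} gives that the process on $\pi$, started from all sites occupied, survives for time $\exp((1-\delta'')\gamma_2(\lambda)\, n)$ with probability tending to $1$. Plugging in the value of $n$ yields $\sigma_N \ge N^{(1-\delta'')(1-\delta')\gamma_2(\lambda)/\log(1/\nu)}$, which is at least the bound \eqref{ERsurvbd} once $\delta',\delta''$ are chosen small relative to $\delta$.

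Everything therefore reduces to the path-existence claim, which I would establish by the second moment method applied to $X_n$, the number of self-avoiding paths on $n$ labeled vertices. The first moment is a direct count: $E[X_n]=\binom{N}{n}\tfrac{n!}{2}(\nu/N)^{n-1}\sim \tfrac12 N\nu^{n-1}$, and with $n-1\approx(1-\delta')\log N/\log(1/\nu)$ one has $\nu^{n-1}=N^{-(1-\delta')}$, so $E[X_n]\sim \tfrac12 N^{\delta'}\to\infty$. This already identifies $\log N/\log(1/\nu)$ as the correct scale for the longest path, and a matching first-moment upper bound shows the longest path cannot be essentially longer (the analogue of Lemma \ref{maxL1D}).

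The main obstacle is the second moment: I need $E[X_n^2]=(1+o(1))E[X_n]^2$, i.e.\ that the dominant contribution comes from pairs of essentially edge-disjoint paths. Writing $P(P,Q\text{ both paths})=(\nu/N)^{e(P)+e(Q)-e(P\cap Q)}$ and classifying ordered pairs by their shared edges, a pair whose overlap is a single segment of $j$ edges carries a factor $(N/\nu)^{j}$ relative to the independent estimate but has $j+1$ fewer free vertices, so its contribution to $E[X_n^2]/E[X_n]^2$ is of order $\mathrm{poly}(n)/(N\nu^{j})$; summing over $1\le j\le n-1$ and using $\nu^{-(n-1)}=N^{1-\delta'}$ gives $\mathrm{poly}(n)\,N^{-\delta'}\to 0$. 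Pairs sharing several disjoint segments are more constrained (more shared edges per free vertex) and are controlled by the same bookkeeping, contributing a lower-order term, while the $P=Q$ diagonal contributes $E[X_n]\ll E[X_n]^2$. This enumeration of overlapping paths is the standard but slightly delicate step where the real work lies. With $E[X_n^2]=(1+o(1))E[X_n]^2$ in hand, the Paley--Zygmund inequality gives $P(X_n\ge 1)\to 1$, and conditioning on the graph (so that $\pi$ is fixed) before applying Lemma \ref{d1surv} to the independent Poisson clocks completes the argument.
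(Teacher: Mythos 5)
Your proposal matches the paper's argument essentially step for step: the paper proves Lemma \ref{maxpER} by exactly this first--moment upper bound and second--moment lower bound on the number of self-avoiding paths of length $(1\pm\delta)\log N/\log(1/\nu)$ (classifying pairs of paths by the number of shared edges), and then combines the resulting long path with Lemma \ref{d1surv}. Your additional remarks on monotonicity in the edge set and on conditioning on the graph before running the Poisson clocks are left implicit in the paper but are correct and welcome.
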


\mn
Again the key result is that the longest path,  $\sim (\log N)/\log(1/\nu)$, see  Section \ref{sec:PfER} for details. Then we use the result for the survivial time of the contact process given in Lemma \ref{d1surv}.

\medskip
To compare the tlast wo results we note that  Corollary 5.11 of Bollob\'as \cite{Boll} or Theorem 1.3.1. in \\cite{DoG})shows that the largest cluster in graph with $N$ vertices
$$
= \frac{1}{\alpha(\nu)}\left( \log N - \left( \frac{5}{2} + o(1) \right) \log\log N \right).
$$ 
where $\alpha(\nu)=\nu-1-\log(\nu)$ is the constant in \eqref{Pofs}, while in the rigorous resulty we use the longest path, which has length 
$$
\sim\log N/ \log(1/\nu). 
$$
To compare the two sizes we note that if $\nu = 1 - \ep$ then  
$$
\log(1/\nu) = - \log(\nu) =  \ep + \frac{\ep^2}{2} +\frac{\ep^3}{3} \ldots
$$
so $\alpha(\nu) =\log(1-\ep) +\ep \sim \ep^2/2$. Thus the large cluster is of size $\sim 2\ep^{-2}\log N$ while the longest path is of size $\sim \ep^{-1} \log N$

\subsection{Two dimensions}

As with one dimensional systems and \ER graphs, we can establish the long time persistence in the Griffiths phase by showing the existence of paths of length $O(\log N)$. Thanks to a result of Grimmett \cite{Gri81} given in Lemma \ref{Gr1981} this is easy. He proves that there is a positive constant $\eta_2(p)$ so that paths of length $\ge \eta_2(p) \log N$ exist in subcritical two-dimensional percolation.

\begin{theorem} \label{Gphase2D}
Suppose $p<1/2$, $\lambda > \lambda_c(\ZZ)$, and $\delta>0$.  The nearest neighbor contact process with randomly deleted edges on $\ZZ^2 \cap [1,N^{1/2}]^2$  starting from all sites occupied survives for time
\beq
\sigma_N \ge N^{(1-\delta) \eta_2(p) \gamma_2(\lambda)}
\qquad\hbox{for large $N$.}
\label{d2Gphase}
\eeq
\end{theorem}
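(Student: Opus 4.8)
The plan is to follow exactly the two-step template already used for the one-dimensional system in Theorem \ref{Gphase} and for \ER graphs in Theorem \ref{GphaseER}: first extract a long open path from the diluted graph, then run the supercritical one-dimensional contact process along that path and invoke Lemma \ref{d1surv}. Since bond dilution and the contact process dynamics are independent, I would first condition on the percolation configuration $\omega$ (the set of retained edges). Because $p<1/2$ we are strictly subcritical for two-dimensional bond percolation, so Grimmett's result Lemma \ref{Gr1981} applies: with probability tending to $1$, the box $\ZZ^2 \cap [1,N^{1/2}]^2$, which has $\sim N$ vertices, contains an open path of length at least $\eta_2(p)\log N$. On this event I would fix one such path and truncate it to a sub-path $P$ of length exactly $L_N := \lceil \eta_2(p)\log N\rceil$; this only shortens the path, and since a sub-path is a subgraph of the longer path, passing to $P$ can only decrease the survival time, so it costs nothing for a lower bound.

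Next comes the monotonicity step. The retained path $P$ is a subgraph of the full diluted graph $G(\omega)$ on the same vertex set, so using the graphical representation with shared death marks and birth arrows one has $\xi^P_t \subseteq \xi^G_t$ whenever both are started from all sites of $P$ occupied: every birth available along a $P$-edge is also available in $G$. Since the theorem starts the full process from all sites occupied, in particular every vertex of $P$ is initially occupied, and hence the survival time of the full process is at least that of the contact process restricted to $P$. But $P$ is a line of $L_N$ vertices, so the restricted process is precisely the one-dimensional nearest-neighbor contact process on $[1,L_N]$ started from all occupied.

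Finally I would apply Lemma \ref{d1surv} with $N$ replaced by the deterministic length $L_N$. Since $L_N\to\infty$, for any $\delta'>0$ the survival time $\sigma_{L_N}$ of this one-dimensional process satisfies $\log\sigma_{L_N}\ge (1-\delta')\gamma_2(\lambda)L_N$ with conditional probability tending to $1$, whence
$$
\sigma_N \ge \sigma_{L_N} \ge \exp\big((1-\delta')\gamma_2(\lambda)\eta_2(p)\log N\big) = N^{(1-\delta')\eta_2(p)\gamma_2(\lambda)}.
$$
Choosing $\delta'$ small (and absorbing the harmless rounding in $L_N$) so that $(1-\delta')\ge(1-\delta)$ gives the claimed bound, and intersecting the high-probability event from Grimmett's lemma with the high-probability event from Lemma \ref{d1surv} completes the argument for large $N$.

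Given Lemmas \ref{Gr1981} and \ref{d1surv}, essentially all the genuine difficulty has been outsourced, so the remaining work is bookkeeping and the proof is no harder than its one-dimensional and \ER counterparts. The one point that will deserve care is keeping the two sources of randomness separate — revealing the percolation configuration first and only then running the dynamics — and ensuring the ``in probability'' convergence in Lemma \ref{d1surv} is applied to the fixed length $L_N$ rather than a configuration-dependent length; this is precisely why truncating to a path of exactly length $L_N$ is convenient, since it lets one invoke the one-dimensional survival estimate with a deterministic parameter before averaging back over $\omega$.
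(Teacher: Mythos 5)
Your proposal is correct and follows essentially the same route as the paper: extract a path of logarithmic length from the subcritical diluted lattice via Grimmett's sponge-crossing result (Lemma \ref{Gr1981}) and then apply the one-dimensional survival estimate of Lemma \ref{d1surv} along that path. The paper's own proof is only a two-sentence sketch, so your additional bookkeeping (conditioning on the percolation configuration, truncating to a deterministic length, and the monotonicity comparison between the process on the path and on the full diluted graph) simply supplies details the paper leaves implicit.
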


\subsection{On the critical line}

There is no critical line in $D=1$. On \ER graphs and in two dimensions we will show that the longest path is $c N^\alpha$, so using or result for the one dimensional contact process, Lemma \ref{d1surv}, the system survives for time
$\ge \exp(c \gamma_2(\lambda) n^\alpha)$. 
Computing as we have several times before if $u(t) = 1/(\log t)^{1/\alpha}$ then $u(t) = 1/n$ when 
$$
t = \exp( N^{\alpha}).
$$

\mn
{\bf Proof for the \ER graph.} We claim that at criticality the longest path will be $\ge cN^{1/3})$. To argue this informally, it is known that the largest cluster at criticality has $\Theta(N^{2/3})$ vertices. Critical clusters are like critical branching processes.  A critical branching process that survives for time $T$ has $\Theta(T^2)$ individuals, so skipping more than a few steps the longest path in a cluster of size $\Theta(N^{2/3})$ should be $\Theta(N^{1/3})$. For a rigorous proof see Addario-Berry, Broutin, and Goldschmidt \cite{Louigi1,Louigi2} who also show  that critical \ER clusters rescaled by $N^{-1/3}$ converge to a sequence of compact metric spaces,  \eopt 

\mn
{\bf Proof for two dimensions.} At criticality crossings of an $L \times L$ box have probability $\approx 1/2$, so taking $L=N^{1/2}$ we see that the longest path has length $\ge c N^{1/2}$ with probability $\ge 1/2$.. To argue that we have a path of this length with high probability, we divide the $L \times L$ square into $k^2$, $L/k \times L/k$ squares and note that the probability they all fail to have crossings is $\exp(-(\log 2) k^2)$. \eopt

\clearp

\section{Proofs in one dimension} \label{sec:Pf1D}

\begin{proof} [Proof of Theorem \ref{Gphase}.]
Inert sites cannot give birth, so the contact processes on disjoint (maximal) intervals of active sites are independent. Suppose that the site 1 is inert. The interval $[1,N]$ begins with an inert interval of length $B_1$ that has
$$
P(B_1=k) = (1-p)^{k-1} p \quad\hbox{ for $k \ge 1$.}
$$
followed by an active interval with length $A_1$ with
$$
P(A_1=k) =p^{k-1}(1-p)  \quad\hbox{ for $k \ge 1$.}
$$
so $EB_1=1/p$ and $EA_i=1/(1-p)$. We can repeat these definitions until the interval $[1,N]$ is used up. A cycle $B_i,A_i$ has expected length 
$$
EB_i + EA_i = \frac{1}{p} + \frac{1}{1-p} = \frac{1}{p(1-p)}.
$$
As $N\to\infty$ the number of active intervals, $M(p)$,  in $[1,N]$ has
$$
M(p) \sim Np(1-p).
$$

\begin{lemma} \label{maxL1D}
Suppose that $A_i$, $i \ge 1$ are independent geometric($1-p$) and let
$L(p) = \max_{1\le i \le M(p)} A_i$ Then 
$$
L(p)/\log N \to 1/\log(1/p).
$$
\end{lemma}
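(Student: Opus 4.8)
The plan is to treat this as a standard extreme-value problem for the maximum of i.i.d.\ geometric variables, with the one wrinkle that the number of variables $M(p)$ is itself random and correlated with the very sequence $A_i$ whose maximum we are taking. I would first record the exact tail $P(A_i>k)=p^k$ for integers $k\ge 0$, which follows immediately from the geometric mass function. For a \emph{deterministic} number $m$ of samples, write $L_m=\max_{1\le i\le m}A_i$, so that $P(L_m\le k)=(1-p^k)^m$.

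The two-sided estimate for $L_m$ is then routine. For the upper bound I would use a union bound $P(L_m>k)\le m p^k$; taking $k=\lceil (1+\ep)\log m/\log(1/p)\rceil$ gives $m p^k = m^{-\ep+o(1)}\to 0$. For the lower bound I would use $P(L_m\le k)=(1-p^k)^m\le\exp(-m p^k)$; taking $k=\lfloor(1-\ep)\log m/\log(1/p)\rfloor$ gives $m p^k=m^{\ep+o(1)}\to\infty$, so this probability also tends to $0$. Since integer rounding shifts $k$ by $O(1)=o(\log m)$, these two facts yield $L_m/\log m\to 1/\log(1/p)$ in probability; in fact a.s.\ convergence follows from monotonicity of $L_m$ in $m$ together with Borel--Cantelli along $m=2^j$.

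To pass from the deterministic index $m$ to the random $M(p)$, I would invoke the law of large numbers for the underlying renewal structure. The pairs $(B_i,A_i)$ are i.i.d.\ with cycle mean $EB_i+EA_i=1/(p(1-p))$, so the renewal count $M(p)$ of active intervals meeting $[1,N]$ satisfies $M(p)/N\to p(1-p)$ a.s., refining the stated asymptotic $M(p)\sim Np(1-p)$. In particular, for any $\ep>0$, with high probability $m_-:=\lfloor(1-\ep)p(1-p)N\rfloor\le M(p)\le\lceil(1+\ep)p(1-p)N\rceil=:m_+$. The correlation between $M(p)$ and the $A_i$ is dissolved by monotonicity: because $L_m$ is nondecreasing in $m$, on this sandwiching event we have $L_{m_-}\le L(p)=L_{M(p)}\le L_{m_+}$, where now the outer maxima run over \emph{deterministic} index sets. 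Since $\log m_\pm\sim\log N$, the deterministic asymptotics give $L_{m_\pm}/\log N\to 1/\log(1/p)$, and squeezing yields $L(p)/\log N\to 1/\log(1/p)$.

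The main obstacle, and really the only point requiring care, is exactly this dependence of $M(p)$ on the sequence being maximized; the monotonicity squeeze is what reduces everything to the deterministic extreme-value computation above. A minor secondary point is that the final active interval in $[1,N]$ may be truncated by the right boundary, but truncation can only shorten an interval, so it is harmless for the upper bound and can simply be discarded (working with the $m_-$ fully formed intervals) for the lower bound.
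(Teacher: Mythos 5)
Your proof is correct, and it differs from the paper's in two respects worth noting. The upper bound is identical (union bound with $P(A_i>k)=p^k$). For the lower bound, however, the paper uses a second-moment argument: it counts the number $Y_{p,\delta}$ of indices $i\le m$ with $A_i>(1-\delta)\log m/\log(1/p)$, notes $EY_{p,\delta}\sim m^\delta$ with variance of the same order, and applies Chebyshev; you instead use the exact product formula $P(L_m\le k)=(1-p^k)^m\le\exp(-mp^k)$, which is more elementary and exploits full independence directly (the paper's route would survive under weaker, e.g.\ pairwise, independence, but that generality is not needed here). The second difference is that you explicitly handle the fact that $M(p)$ is random and correlated with the $A_i$, by sandwiching $M(p)$ between deterministic $m_\pm=(1\pm\epsilon)p(1-p)N$ via the renewal LLN and using monotonicity of $m\mapsto L_m$; the paper simply ``plugs in $m=Np(1-p)$'' without comment, so your squeeze argument (and your remark about the possibly truncated final interval) fills a small gap in the written proof rather than taking a wrong turn. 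Note that the truncation issue is harmless for the theorem the lemma feeds into, since only the lower bound on the longest active interval is used there.
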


\begin{proof}
To prove this we note that $P(A_i > K ) = p^K$. Let $L_m = \max_{1\le i \le m} A_i$
\begin{align*}
P( L_m > (1+\delta)(\log m) /\log(1/p) ) 
\le m p^{ (1+\delta)(\log m) /\log(1/p) } &\\
 = m \exp( \log(p) (1+\delta)(\log m) /\log(1/p) ) & = m^{-\delta} \to 0.
\end{align*}
Let $Y_{p,\delta}$ be the expected number of $A_i$ with $1\le i \le m$ and
$A_i > (1-\delta)(\log m) /\log(1/p)$. Repeating the last caclulation with $(1+\delta)$ replaced by $(1-\delta)$ we have $EY_{p,\delta} \sim m^\delta$. Since $Y_{p,\delta}$ is binomial with a small success probability, the variance is also $\sim m^\delta$, so Chebyshev's inequality implies
$$
P( Y_{p,\delta}\le m^\delta/2) \le \frac{m^{\delta}}{m^{2\delta}/4} \to 0.
$$ 
This shows that $L_m/\log m \to 1/\log(1/p)$.
Plugging in $m=Np(1-p)$ provs Lemma \ref{maxL1D}.
\end{proof}

\noindent
Using Lemma \ref{d1surv} we see that for any $\delta>0$ the contact process on the longest active interval in $[1,N]$ survives for time.
$$
\ge \exp( (1-\delta) \gamma_2(\lambda) \log N /\log(1/p)),
$$
which proves Theorem \ref{Gphase}. \end{proof}

\clearp

\section{Proofs for \ER graphs} \label{sec:PfER}

\subsection{Completion of the derivation of Physics Result 4}

Plugging \eqref{taus} and \eqref{Pofs} into \eqref{rhot} gives
$$
u(t) \sim \int ds \, s \frac{1}{\sqrt{2\pi} p} s^{-3/2} e^{-s\alpha(\nu)} 
\exp(-t \exp(-A(\lambda) s ) ,
$$
where $\alpha(\nu) = \log(\nu) - 1- \nu$.
To maximize the integrand (ignoring the $s^{-3/2}$) we  look only at what is inside the exponential and let $M(s) =  -s\alpha(\nu) -t \exp(-A(\lambda) s )$. Taking a derivative
$$
\frac{d}{ds}M(s) 
= - \alpha(\nu) + t A(\lambda) \exp(-A(\lambda) s ).
$$
This is $=0$ at $s_0=0$ when
$$
\exp(-A(\lambda) s_0 ) =  \frac{\alpha(\nu)}{t A(\lambda)}
\quad\hbox{or}\quad s_0 = \frac{\log t + O(1)}{A(\lambda)}.
$$
Taking $s=s_0$ and dropping the $O(1)$ 
$$
M(s_0) = -\frac{\alpha(\nu)(\log t)}{A(\lambda)} -t \frac{\alpha(\nu)}{t A(\lambda)}.
$$
The second term is $O(1)$ so the maximum value of the integrand is indeed $t^{-\theta(\nu,\lambda)}$with $\theta(\nu,\lambda) = \alpha(\nu)/A(\lambda)$.
Detailed computation of the asymptotics of the integral are left to the reader \eopt

\subsection{Proof of Theorem \ref{GphaseER}.} 

The first step is to prove

\begin{lemma} \label{maxpER}
If $\nu<1$ then as $N \to\infty$ the longest path in an \ERns$(N,\nu/N)$
$\sim (\log N)/\log(1/\nu)$.
\end{lemma}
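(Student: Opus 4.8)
The plan is to estimate the number $X_k$ of self-avoiding paths on $k$ vertices directly, by the first and second moment methods, choosing $k$ of order $(\log N)/\log(1/\nu)$; no appeal to the cluster structure is needed. The basic computation is that an ordered sequence of $k$ distinct vertices can be placed in $N(N-1)\cdots(N-k+1)\sim N^k$ ways, each of the $k-1$ consecutive pairs must be an edge (probability $\nu/N$), and dividing by $2$ to account for reversal gives
$$
E X_k \sim \frac{N^k}{2}\left(\frac{\nu}{N}\right)^{k-1} = \frac{N}{2}\,\nu^{k-1}.
$$
Writing $k-1 = c(\log N)/\log(1/\nu)$ this equals $\tfrac12 N^{1-c}$, so the first moment crosses $1$ exactly at $c=1$, which already pins down the conjectured constant. (Whether one measures a path by its vertices or its edges affects the answer only at lower order.)

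For the upper bound I would take $k=\lceil (1+\delta)(\log N)/\log(1/\nu)\rceil$, so that $E X_k \sim \tfrac12 N^{-\delta}\to 0$, and conclude by Markov's inequality that with high probability there is no path on $k$ vertices; hence $L(p)\le (1+\delta)(\log N)/\log(1/\nu)$ eventually. For the lower bound take $k=\lfloor (1-\delta)(\log N)/\log(1/\nu)\rfloor$, so $E X_k\sim \tfrac12 N^{\delta}\to\infty$, and run the second moment method: since $P(X_k\ge 1)\ge (E X_k)^2/E(X_k^2)$, it suffices to prove $E(X_k^2)/(E X_k)^2\to 1$.

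I would split $E(X_k^2)=\sum_{P,P'}P(\text{both present})$ according to the number $j$ of edges shared by the two paths. The edge-disjoint pairs ($j=0$) contribute $(1+o(1))(E X_k)^2$. For the $j\ge 1$ terms, a pair sharing a single edge may be built by first choosing $P$ (giving back a factor $E X_k$), then choosing which of the $O(k)$ edge-slots of $P'$ coincides with one of the $k-1$ edges of $P$, and finally extending $P'$ freely along its remaining $k-2$ edges; this contributes a factor of order $k^2\,\nu^{k-2}$ in place of $\tfrac{N}{2}\nu^{k-1}$, so the single-overlap contribution is smaller than $(E X_k)^2$ by a factor of order $k^2/N=O((\log N)^2/N)\to 0$. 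Each additional shared edge costs a further factor of this order, so the sum over $j\ge 1$ is $o((E X_k)^2)$ and the second moment ratio tends to $1$.

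The main obstacle is the bookkeeping for overlapping pairs in the second moment: the shared edges need not form a single contiguous subpath, so a clean bound requires organizing the pairs by the subgraph $P\cup P'$ (with $v$ vertices and $e\le 2(k-1)$ edges) and checking that every overlap pattern is suppressed by at least one factor $k^2/N$. Because $k=O(\log N)$ this suppression is more than enough, but arranging the estimate so that it is uniform over all overlap patterns is where the real work lies. Combining the two bounds yields $L(p)/\log N\to 1/\log(1/\nu)$.
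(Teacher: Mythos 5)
Your proposal follows essentially the same route as the paper's proof: a first-moment (Markov) bound at $k=(1+\delta)(\log N)/\log(1/\nu)$ for the upper estimate, and a second-moment computation over pairs of paths classified by the number of shared edges for the lower estimate, with the same threshold $(\log N)/\log(1/\nu)$ where $E X_k$ crosses $1$. The overlap bookkeeping you flag as the remaining work is treated in the paper only for one and two shared edges before asserting the general case, so the two arguments are at the same level of detail.
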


\begin{proof}
If $k^2/N \to 0$, which we assume throughout the proof, then the expected number of (self-avoiding) paths of length $k$ in an Erd\"os-R\'eny($N,\nu/N$) graph
$$
\Pi(N,k,\nu) = N(N-1) \cdots (N-k+1) \cdot (\nu/N)^{k-1} \sim N \nu^{k-1} .
$$
$\Pi(N,k_1,\nu) \approx 1$ when $k_1(\nu,N)= (\log N)/\log (1/\nu)$. If $\bar k = (1+\delta)k_1$ then 
$$
\Pi(N,\bar k,\nu) \le N^{-\delta} \to 0 ,
$$
which gives the upper bound on the length of the longest path. 

If $\underline{k} = (1-\delta) k_1$ then $\Pi(N,\underline{k},\nu) \sim N^\delta$. To prove the lower bound let $\pi$ be a sequence of distinct vertices of length $\underline{k}$, let $A_\pi$ be the event that $\pi$ is a paths of length $\underline{k}$ in the graph, and let $Y_\delta$ be the number of path of length $\underline{k}$. If $\pi$ and $\sigma$ do not share an edge in common then $A_{\pi}$ and $A_\sigma$ are independent. Let $\Sigma^j$ be the number of pairs of paths $\pi$ and $\sigma$ that have exactly $j$ edges in common. 

We get a lower bound on $\Sigma^0$ by assuming all vertices in each path are different.
$$
N(N-1) \cdots (N-2k-1)  (\nu/n)^{2k-2} \le \Sigma^0  \le \Pi^2 ,
$$
so $\Sigma^0 \sim \Pi^2= (N \nu^{k-1})^2$. In $\Sigma^1$ we need to pick the edge to be the same.
\begin{align*}
\Sigma^1  \le N^k \cdot k N^{k-2} \cdot ( \nu/N)^{(k-1)+(k-2)}& \\
\sim kN\nu^{2k-3}= (\Pi)^2 \cdot k \nu^{-1}/n = o(\Sigma^0). &
\end{align*}
 When it comes to $\Sigma^2$ the two edges can be adjacent in the path or not
\begin{align*}
\Sigma^2 \approx N^k \cdot [k N^{k-3} + k^2 N^{k-4}] 
\cdot ( \nu/N^{(k-1)+(k-3)})& \\
\le (\Pi)^2 \cdot \nu^{-2}  [k/N+ k^2/N^2] = o(\Sigma^0)&
\end{align*}
The number of possibilities increases as the number of duplicates increases but the best case occurs when all the agreements are in a row. Since $\sigma^0 \sim \Pi^2$ the square of the mean, it follows that the variance is $o(\Pi^2)$, and the ratio $Y_\delta/EY_\delta \to 1$.
\end{proof}

\clearp

\section{Proofs in two dimensions} \label{sec:Pf2d}

\subsection{Supercritical phase}

Let $\LL^2$ be the graph with vertex set $\ZZ^2$ and edges connecting nearest neighbors. We begin by describing planar graph duality for $\LL^2$.  Each edge in $\LL^2$ is associated with the edge in $(1/2,1/2) + \LL^2$ that intersects it, see Figure 3. We begin by making the edges in $\LL^2$ independently open with probability $p$, and then declaring an edge in $(1/2,1/2) + \LL^2$ to be open if and only if it is paired with a closed edge.

\begin{figure}[ht]
\begin{center}
\begin{picture}(280,240)
\put(40,20){\line(1,0){160}}
\put(40,60){\line(1,0){160}}
\put(40,100){\line(1,0){160}}
\put(40,140){\line(1,0){160}}
\put(40,180){\line(1,0){160}}
\put(40,220){\line(1,0){160}}
\put(40,20){\line(0,1){200}}
\put(80,20){\line(0,1){200}}
\put(120,20){\line(0,1){200}}
\put(160,20){\line(0,1){200}}
\put(200,20){\line(0,1){200}}
\thicklines
\linethickness{0.75mm}
\put(20,40){\line(1,0){200}}
\put(20,80){\line(1,0){200}}
\put(20,120){\line(1,0){200}}
\put(20,160){\line(1,0){200}}
\put(20,200){\line(1,0){200}}
\put(20,40){\line(0,1){160}}
\put(60,40){\line(0,1){160}}
\put(100,40){\line(0,1){160}}
\put(140,40){\line(0,1){160}}
\put(180,40){\line(0,1){160}}
\put(220,40){\line(0,1){160}}
\put(16,20){\bf 0}
\put(216,20){\bf 5}
\put(7,35){\bf 0}
\put(7,195){\bf 4}
\end{picture}
\caption{Planar graph duality. If there is a left-to-right crossing of $[0,n+1] \times [0,n]$ if and only if there is no top-to-bottom crossing of $(1/2,-1/2) + [0,n] \times [0, n+1]$.  When bonds are open with probability 1/2 the two crossings have the same probability and add up to 1, so they are both $=1/2$.}
\end{center}
\label{fig:2dpgd}
\end{figure}

The picture of the duality drawn in Figure 3 led to Ted Harris' proof in \cite{TEH60} that the critical value for two dimensional bond percolation is $\ge 1/2$. This was the starting point for developments that led to Kesten's proof \cite{Kes80} that the critical probability was $1/2$. Computing the critical value required the development of a machinery for estimating sponge crossing probabilities. See Chapters 4, 6, and 7 in Kesten \cite{Kes82}, or Chapter 11 in Grimmett \cite{Gri99}). We will avoid this machinery by using the fact that in the subcritical regime there is exponential bound on the radius of clusters.

\begin{lemma} \label{crossprob}
Consider bond percolation with $p>1/2$. There is a constant $\gamma(p)>0$ so that the  probability of a left-to-right crossing of $[0,L] \times [0,K]$ is 
$$
\ge 1 - L\exp(-\gamma(p) K).
$$
\end{lemma}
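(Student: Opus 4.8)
\emph{The plan} is to combine the planar graph duality described above with the exponential decay of the cluster radius in the subcritical regime, which is exactly where the hypothesis $p>1/2$ enters. Declare a dual edge to be open when it is paired with a closed primal edge. Since a primal edge is open with probability $p$, each dual edge is open with probability $1-p<1/2$, so the dual percolation process is subcritical and the exponential radius bound applies to it.

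First I would invoke duality in its rectangular form, exactly as in the duality described above: there is an open left-to-right crossing of $[0,L]\times[0,K]$ if and only if there is no top-to-bottom dual-open crossing of the associated dual rectangle, whose height is comparable to $K$ and whose width is comparable to $L$. Thus
$$
P(\hbox{no left-to-right crossing}) = P(\hbox{there is a top-to-bottom dual-open crossing}).
$$
Next I would control the right-hand probability by a union bound over starting points. A top-to-bottom dual-open crossing contains at least one dual vertex $v$ on the top edge of the dual rectangle, and from $v$ the dual-open cluster descends to the bottom edge, so the radius $R_v$ of the dual-open cluster of $v$ (the greatest distance it reaches from $v$) is at least $K-O(1)$. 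There are at most $L+O(1)$ dual vertices on the top edge, so
$$
P(\hbox{no left-to-right crossing}) \le \sum_{v} P(R_v \ge K - O(1)).
$$

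Finally I would feed in the subcritical exponential radius bound: because the dual density $1-p$ is strictly below the critical value $1/2$, there is a constant $\gamma(p)>0$, uniform in $v$, with $P(R_v \ge n) \le \exp(-\gamma(p)n)$. Substituting this and absorbing the additive $O(1)$ terms (both in the vertex count and in the exponent) into a slightly smaller constant for large $K$ yields $P(\hbox{no left-to-right crossing}) \le L\exp(-\gamma(p)K)$, which is the assertion.

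Since the exponential radius bound is taken as an input (the standard sharpness statement for subcritical percolation, as flagged in the text just before the lemma), no delicate probabilistic estimate is required. I expect the only real work to be the geometric bookkeeping: verifying that a top-edge vertex of the crossing exists, that the descending dual path genuinely has vertical extent $\ge K-O(1)$ so that $R_v$ is large, and that the number of candidate starting points on the top edge is $O(L)$. The mild loss from replacing $L+O(1)$ by $L$ and $K-O(1)$ by $K$ is harmless, as we are free to shrink $\gamma(p)$; this bookkeeping, rather than any analytic step, should be the main point to get right.
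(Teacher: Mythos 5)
Your argument is correct and is exactly the route the paper intends: the lemma is stated immediately after the remark that planar duality together with the exponential bound on the radius of subcritical clusters will be used, and the paper supplies no further detail. Your write-up just fills in the standard bookkeeping --- the dual density $1-p<1/2$ is subcritical, and a union bound over the $O(L)$ dual vertices on the top edge, each of whose dual cluster must have radius $\ge K-O(1)$, gives the stated estimate.
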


\noindent
From Lemma \ref{crossprob} follows that if $K = C\log L$ and $C > 3/\gamma$ then the probability of a left-to-right crossing of $[0,L] \times [0,C\log L] \ge 1 - L^{-3}$.
For $k \le K = \lfloor L /\log L \rfloor$ define boxes
\begin{align*}
H_k & = [0,L] \times [(k-1)C\log L,kC\log L] \\
V_k^0 & = [0,C\log L,L] \times [(k-1)C\log L,(k+1) C\log L] \\
V_k^L & = [L -C\log L,L] \times [(k-1)C\log L,(k+1) C\log L]
\end{align*}

\begin{figure}[ht]
\begin{center}
\begin{picture}(240,160)
\put(15,5){0}
\put(215,5){$L$} 
\put(225,35){$C \log L$}
\put(225,55){$2C \log L$}
\put(225,75){$3C \log L$}
\put(225,95){$4C \log L$}
\put(225,115){$5C \log L$}
\put(20,32){\line(1,0){200}}
\put(20,47){\line(1,0){200}}
\put(20,74){\line(1,0){200}}
\put(20,91){\line(1,0){200}}
\put(20,113){\line(1,0){100}}
\put(122,113){$\ldots$}
\put(205,20){\line(0,1){40}}
\put(212,60){\line(0,1){40}}
\put(28,40){\line(0,1){40}}
\put(34,80){\line(0,1){40}}
\put(120,125){$\vdots$}
\thicklines
\linethickness{0.75mm}
\put(20,20){\line(1,0){200}}
\put(20,40){\line(1,0){200}}
\put(20,60){\line(1,0){200}}
\put(20,80){\line(1,0){200}}
\put(20,100){\line(1,0){200}}
\put(20,120){\line(1,0){200}}
\put(20,20){\line(0,1){140}}
\put(220,20){\line(0,1){140}}
\put(200,20){\line(0,1){100}}
\put(40,40){\line(0,1){80}}
\end{picture}
\caption{Picture of the construction of a long path.}
\end{center}
\label{fig:2dpgd}
\end{figure}

 Given this result if we let $L=N^{1/2}$ then we can create a path of length $ \ge N/\log N$ in $[0,L]^2$ by combining

\medskip
left-to-right crossings of $H_k$  for odd $k \le K$

right-to-left crossings of $H_k$ for even $k\le K$

bottom-to-top crossings of  $V^L_k$ for odd $k \le K$

bottom to top crossings of  $V^0_k$ for even $k\le K$

\mn
The left to right crossings have length $\ge L$ while the number of horizontal strips is
$\ge \lfloor N/C \log N \rfloor$, so we have a path of length $\ge \kappa_2(p)N/\log N$. The existence of such a path in combination with Lemma \ref{d1surv} gives
Theorem \ref{dens2D}

\subsection{Griffiths phase} 

As in one dimension and for  \ER graph we establish the long time persistence in the Griffiths phase by showing the existence of long paths. Thanks to a result of Grimmett \cite{Gri81} this is easy

\begin{lemma} \label{Gr1981}
Consider bond percolation on the square lattice with $p<1/2$. Let $S(L)$ be the probability that some open path joins the longer sides of a sponge with dimensions $L$ by $a \log L$.  There is a positive constant $\alpha$ which depends on $p$ so that as $L\to\infty$
$$
S(L) \to \begin{cases} 0 & \hbox{if $a\alpha>1$}\\ 
1 & \hbox{if $a\alpha<1$}
\end{cases}
$$
\end{lemma}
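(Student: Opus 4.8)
The plan is to identify the constant $\alpha$ with the inverse correlation length of subcritical bond percolation: for $p<1/2$, sharpness of the subcritical phase (Aizenman--Barsky, Menshikov) furnishes a constant $\alpha=\alpha(p)>0$ with $P_p(0\leftrightarrow\partial B_h)\le Ce^{-\alpha h}$, while FKG subadditivity of $-\log P_p\big((0,0)\leftrightarrow(0,h)\big)$ gives the matching lower bound $P_p\big((0,0)\leftrightarrow(0,h)\big)\ge e^{-\alpha h}$. The long sides of the sponge have length $L$ and are separated by the short dimension $h=a\log L$, so ``joining the longer sides'' means producing a top-to-bottom crossing of the wide, short rectangle $R=[0,L]\times[0,h]$. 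The whole result is a balance between the $\asymp L$ horizontal positions at which such a crossing may form and the cost $\asymp e^{-\alpha h}=L^{-a\alpha}$ of each individual short crossing, so that the threshold sits exactly at $a\alpha=1$.

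For the upper bound ($a\alpha>1$) I would use a first-moment union bound. If $R$ has a top-to-bottom crossing then some bottom vertex $(x,0)$ with $0\le x\le L$ has a cluster reaching height $h$, hence radius $\ge h$, so
\[
S(L)\le \sum_{x=0}^{L} P\big((x,0)\leftrightarrow \{y=h\}\big)\le (L+1)\,C\,e^{-\alpha h}=C'\,L^{\,1-a\alpha}\to 0 .
\]
For the lower bound ($a\alpha<1$) I would use an independence argument. Cut $R$ into $\lfloor L/w\rfloor$ disjoint vertical sub-boxes of width $w$; a top-to-bottom crossing of any one of them is a crossing of $R$, and these increasing events depend on disjoint edge sets, hence are genuinely independent. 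Writing $g_w(h)$ for the crossing probability of a single $w\times h$ box,
\[
S(L)\ge 1-\big(1-g_w(h)\big)^{\lfloor L/w\rfloor}\ge 1-\exp\!\Big(-\tfrac{L}{2w}\,g_w(h)\Big),
\]
so it suffices to show $\tfrac{L}{w}g_w(h)\to\infty$. Everything thus reduces to a lower bound on the confined crossing probability: if I can show, for a slowly growing width $w=w(L)$, that $g_w(h)\ge e^{-\alpha h(1+o(1))}=L^{-a\alpha(1+o(1))}$, then $\tfrac{L}{w}g_w(h)\ge w^{-1}L^{\,1-a\alpha+o(1)}\to\infty$ since $a\alpha<1$ and $w$ is only polylogarithmic.

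The main obstacle — and the only place real percolation input beyond bare exponential decay enters — is precisely this confined lower bound $g_w(h)\ge e^{-\alpha h(1+o(1))}$. The free two-point function already gives $P\big((x,0)\leftrightarrow(x,h)\big)\ge e^{-\alpha h}$, but I must force the realizing path to stay inside a width-$w$ column at only subexponential cost in $h$. I would control this by chaining: tile the height into blocks of height $\ell$ and demand, in each block, an open crossing from bottom-center to top-center inside an $\ell\times\ell$ box, an event of probability $\ge q_\ell>0$ independent of the block, which gives $g_\ell(h)\ge q_\ell^{h/\ell}=e^{-\alpha_\ell h}$ with $\alpha_\ell:=\ell^{-1}\log(1/q_\ell)$. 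Confinement only raises the rate, so $\alpha_\ell\ge\alpha$, and the crux is the matching convergence $\alpha_\ell\downarrow\alpha$ as $\ell\to\infty$ — i.e.\ the finite-width (strip) correlation length converges to the bulk correlation length, with no gap. Taking $\ell=\ell(L)\to\infty$ slowly then produces the required $o(1)$ in the exponent and makes the lower and upper thresholds coincide at $a=1/\alpha$. Establishing $\alpha_\ell\to\alpha$ is the delicate step, and I would either invoke Grimmett \cite{Gri81} directly or reprove it through a subadditive comparison of confined and free connectivities (Ornstein--Zernike theory gives the same conclusion, with transverse fluctuations of order $\sqrt h$, but is heavier than needed here).
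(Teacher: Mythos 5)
The paper does not prove this lemma at all: it is quoted as a known result of Grimmett \cite{Gri81} (``Critical sponge dimensions in percolation theory''), so there is no in-paper argument to compare against. Judged on its own terms, your sketch reproduces the standard two-sided structure that Grimmett's proof actually uses: a first-moment union bound over the $O(L)$ starting points on the long side for the $a\alpha>1$ direction, and independence of $\asymp L/w$ disjoint vertical columns for the $a\alpha<1$ direction, with the threshold located at the balance $L\cdot e^{-\alpha h}\asymp 1$. The reduction is correct, and you have correctly isolated the one step that carries all the real content: showing that the crossing probability of a narrow column of height $h$ is $e^{-\alpha h(1+o(1))}$ with the \emph{same} $\alpha$ as the unconfined two-point function, i.e.\ that the confined (strip) decay rate $\alpha_\ell$ converges down to the bulk rate $\alpha$. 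That convergence is not a routine consequence of anything you set up earlier --- the chaining/FKG argument only gives $\alpha_\ell\ge\alpha$ and subadditivity of $-\log q_\ell$ only gives that the limit exists --- and it is essentially the theorem of \cite{Gri81} itself, so ``invoke Grimmett directly'' for that step makes the proof circular rather than self-contained. If you want a genuine proof you must supply the Ornstein--Zernike-type input (or a hands-on argument that optimal long connections have sublinear transverse wandering); as an outline pointing at the right reference, however, the proposal is sound.

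Two smaller points. First, you have the direction of the subadditivity argument backwards: Fekete applied to $g(h)=-\log P_p\bigl((0,0)\leftrightarrow(0,h)\bigr)$ (subadditive by FKG) gives $g(h)\ge \alpha h$ for all $h$, i.e.\ the \emph{upper} bound $P\le e^{-\alpha h}$, while the lower bound $P\ge e^{-(\alpha+o(1))h}$ comes from $g(h)/h\to\alpha$; positivity of $\alpha$ is what sharpness supplies. Second, in the union bound you silently replace the point-to-point rate by the point-to-halfplane rate; these agree only up to a factor polynomial in $h$, which is harmless here since $h=a\log L$, but it should be said.
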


\noindent
Let $L=N^{1/2}$. This implies the existence of paths of length $\eta_2(p) \log L$ where $\eta_2(p) =  1/2\alpha(p)$. Combining this result with Lemma \ref{d1surv} proves Theorem \ref{Gphase2D}.

\clearp

\end{document}